\numberwithin{equation}{section}
                        \theoremstyle{plain}
\newcommand\no[1]{}
\newtheorem{theorem}{Theorem}[section]
\newtheorem{thm}{Theorem}
\newtheorem{lemma}[theorem]{Lemma}
\newtheorem{conjecture}{Conjecture}
\theoremstyle{definition}
\newcommand{\la}{\langle}
\newcommand{\ra}{\rangle}
\def\BC{\mathbb C}
\def\BZ{\mathbb Z}
\def\CA{\mathcal A}
\def\CR{\mathcal R}
\def\CT{\mathcal T}
\def\fp{\mathfrak p}
\def\ft{\mathfrak t}
\def\la{\langle}
\def\ra{\rangle}
\def\ve{\varepsilon}
\def\be { \begin{equation} }
\def\ee { \end{equation} }
\def\BC{\mathbb C}
\def\BZ{\mathbb Z}
\def\CA{\mathcal A}
\def\CR{\mathcal R}
\def\CT{\mathcal T}
\def\fp{\mathfrak p}
\def\ft{\mathfrak t}
\def\la{\langle}
\def\ra{\rangle}
\def\ve{\varepsilon}
\def\be { \begin{equation} }
\def\ee { \end{equation} }
\begin{document}

\title[Strong AJ conjecture for the figure eight knot]
{The strong AJ conjecture for the figure eight knot}

\author{Hoang-An Nguyen}  
\address{Department of Mathematics, The University of Iowa, 
Iowa City, Iowa 52242, USA}
\email{hoang-an-nguyen@uiowa.edu}

\author{Anh T. Tran}
\address{Department of Mathematical Sciences, The University of Texas at Dallas, Richardson, TX 75080, USA}
\email{att140830@utdallas.edu}

\begin{abstract}
Motivated by the theory of quantum A-ideals of Frohman-Gelca-LoFaro, the theory of $q$-holonomicity of quantum invariants of Garoufalidis-Le and the AJ conjecture of Garoufalidis, Sikora formulated the strong AJ conjecture which relates the A-ideal and recurrence ideal of a knot in the 3-sphere. This conjecture has been verified for all torus knots and most of their cables. In this paper, we verify the strong AJ conjecture for the figure eight knot. 
\end{abstract}

\thanks{2000 {\it Mathematics Subject Classification}. Primary 57M27, Secondary 57M25.}

\thanks{{\it Key words and phrases.\/} colored Jones polynomial, A-polynomial, AJ conjecture, figure eight knot.}

\maketitle

%\keywords{colored Jones polynomial, A-polynomial, AJ conjecture, strong AJ conjecture, cable knot, torus knot.}

%\ccode{Mathematics Subject Classification 2010: 57M27, 57N10.}

\section{Introduction}

\subsection{The colored Jones polynomial and recurrence ideal} For a knot $K$ in $S^3$ and a positive integer $n$, let $J_K(n) \in \BZ[t^{\pm 1}]$ denote the $n$-colored Jones polynomial of $K$ with zero-framing. The polynomial $J_K(n)$ is the quantum link invariant, as defined by Reshetikhin and Turaev \cite{RT}, associated to the Lie algebra $sl_2(\BC)$, with the color $n$ standing for the irreducible $sl_2(\BC)$-module $V_{n}$ of dimension $n$. Here we use the functorial normalization, i.e. the one for which $J_{\mathrm{unknot}}(n)=[n] := (t^{2n}- t^{-2n})(t^2 -t^{-2}).$ 
It is known that $J_K(1)=1$ and $J_K(2)$ is the ordinary Jones polynomial \cite{Jones}. The color $n$ can take non-positive integer values by setting $J_K(-n) = - J_K(n)$ and $J_K(0)=0$.

 Consider a discrete function $f: \BZ \to \CR:=\BC[t^{\pm 1}]$ and define the linear operators $L, M$ acting on such functions by $(Lf)(n) := f(n+1)$ and $(Mf )(n) := t^{2n} f(n).$
It is easy to see that $LM = t^2 ML$, and that $L^{\pm 1}, M^{\pm 1}$ generate the quantum torus $\CT$, a non-commutative ring with presentation
$\mathcal T := \mathbb \CR\langle L^{\pm1}, M^{\pm 1} \rangle/ (LM - t^2 ML).$

Let $\mathcal A_K := \{ P \in \mathcal T \mid  P J_K=0\}$ which is a left-ideal of $\mathcal T$, called the {\em recurrence ideal} of $K$. It was proved in \cite{GL} that for every knot $K$, the
recurrence ideal $\mathcal A_K$ is non-zero. An element in $\mathcal A_K$ is called a recurrence relation for the colored Jones polynomial of $K$.

\subsection{The AJ conjecture} The ring $\mathcal T$ is not a principal left-ideal domain, i.e. not every left-ideal of $\mathcal T$ is generated by one element. By adding all  inverses of polynomials in $t,M$ to
$\mathcal T$ one gets a principal left-ideal domain $\tilde\CT$, c.f. \cite{Ga}. The ring $\tilde{\CT}$ can be formally defined as follows. Let
$\CR(M)$ be the fractional field of the polynomial ring $\CR[M]$.
Let $\tilde \CT$ be the set of all Laurent polynomials in the
variable $L$ with coefficients in $\CR(M)$:
$$\tilde
\CT :=\Big\{\sum_{i\in \BZ}a_i(M) L^i \,\, | \quad a_i(M)\in \CR(M),
\,\,\, f_i=0  \quad \text{almost always} \Big\},
$$
and define the product in  $\tilde \CT$ by $a(M) L^{k} \cdot b(M)
L^{l} :=a(M)\, b(t^{2k}M) L^{k+l}.$

The left ideal extension $\tilde \CA_K :=\tilde \CT \CA_K$ of $\CA_K$ in
$\tilde \CT$ is then generated  by a polynomial
$\alpha_K(t,M,L) = \sum_{i=0}^{d} \alpha_{K,i}(t,M) \, L^i$,
where $d$ is assumed to be minimal and all the
coefficients $\alpha_{K,i}(t,M)\in \BZ[t^{\pm1},M]$ are assumed to
be co-prime. That $\alpha_K$ can be chosen to have integer
coefficients follows from the fact that $J_K(n) \in
\BZ[t^{\pm1}]$. The polynomial $\alpha_K$ is defined up to a polynomial in $\mathbb Z[t^{\pm 1},M]$. We call $\alpha_K$ the {\em recurrence polynomial} of $K$.

Let $\ve$ be the map reducing $t=-1.$ Motivated by the theory of quantum A-ideals of Frohman-Gelca-LoFaro \cite{FGL}, Garoufalidis \cite{Ga} formulated the AJ conjecture which relates the A-polynomial and recurrence polynomial of a knot in the 3-pshere.

\begin{conjecture}{\bf (AJ conjecture)} For every knot $K$ in $S^3$, $\ve(\alpha_K)$ is equal to the A-polynomial $A_K(M,L)$, up to multiplication by a polynomial depending on $M$ only.
\end{conjecture}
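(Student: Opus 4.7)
The plan is to attack the conjecture through the Kauffman bracket skein theory of the knot complement, following the framework of Frohman--Gelca--LoFaro that the statement was originally motivated by. The key bridging object is the \emph{peripheral ideal} $\mathcal{P}_K \subset \mathcal{T}$: the kernel of the map $\mathcal{T} \to \mathcal{S}(\partial N(K))$ composed with the gluing map into the skein module of the complement. I would show, as a general structural result, that $\mathcal{P}_K \subseteq \mathcal{A}_K$, because any element of $\mathcal{P}_K$ annihilates the colored Jones function via the universal $R$-matrix evaluation of skein relations. This gives an a priori inclusion of ideals in $\tilde{\mathcal{T}}$, whence a divisibility $\alpha_K \mid \pi_K$, where $\pi_K$ is a generator of $\tilde{\mathcal{T}}\mathcal{P}_K$.

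Next, I would examine the classical limit $\varepsilon\colon t \mapsto -1$. On the skein side, the quantum torus degenerates to the coordinate ring of the character variety of the torus, and the image of $\mathcal{S}(\partial N(K))$ in it is identified, by Bullock's theorem and the Frohman--Gelca--Sikora setup, with functions pulled back from the $SL_2(\mathbb{C})$ character variety of the knot complement. Under this identification, the classical limit $\varepsilon(\pi_K)$ cuts out exactly the image of the character variety of $S^3 \setminus K$ restricted to the peripheral torus, which by definition is the A-polynomial $A_K(M,L)$ (up to an $M$-only factor arising from abelian components). Combined with the previous step, this would show $\varepsilon(\alpha_K)$ divides $A_K(M,L)$ modulo such factors.

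The reverse divisibility is where the real difficulty lies. One would need to show that no spurious irreducible factor of $A_K$ can be killed in passing from $\mathcal{P}_K$ to $\mathcal{A}_K$. Concretely, this amounts to proving that every recurrence relation for $J_K$ actually comes from a skein element on the boundary torus, i.e.\ $\mathcal{A}_K \subseteq \mathcal{P}_K$ (and thus equality) after a suitable localization. Equivalently, one must rule out ``accidental'' recurrences that lower the degree of $\alpha_K$ below the minimal skein-theoretic recursion. I would approach this through the cyclotomic expansion and the $q$-holonomicity structure of Garoufalidis--L\^e, attempting to match the $M$-degree of $\alpha_K$ to the $\ell$-degree of $A_K$ via an asymptotic analysis of the leading and trailing coefficients of $J_K(n)$ as $n \to \infty$.

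The main obstacle, which is precisely why this remains a conjecture in general, is the last step: controlling the minimal-order recurrence. For specific families (torus knots, two-bridge knots, the figure eight) one can exhibit an explicit recurrence and compare it head-on with a known A-polynomial, but a general argument requires a uniform lower bound on the order of $\alpha_K$, and no such bound is currently available without case-by-case geometric input. A realistic intermediate goal, and what the present paper in fact pursues in a stronger form for $4_1$, is to verify the conjecture (and its strong refinement) for each knot individually once both $\alpha_K$ and $A_K$ can be computed.
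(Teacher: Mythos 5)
The statement you are addressing is stated in the paper as a \emph{conjecture}, not a theorem: the paper offers no proof of it, and it remains open in general. The paper only \emph{uses} the previously known verification of the AJ conjecture for the figure eight knot (due to Garoufalidis, cited as [Ga]) as an input to its actual result, the strong AJ conjecture for that knot. So there is no proof in the paper to compare yours against, and your proposal, as you yourself concede in its final paragraph, is a strategy outline rather than a proof.

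To be concrete about where the outline falls short of a proof. First, even your ``easy'' direction is not actually established in general: the inclusion $\mathcal{P}_K \subseteq \mathcal{A}_K$ does give $\alpha_K \mid \pi_K$, but the claim that $\varepsilon(\pi_K)$ ``cuts out exactly'' the restriction of the character variety to the boundary torus is itself an open problem. Frohman--Gelca--LoFaro prove only one containment between the classical limit of the peripheral ideal and the A-ideal; the other depends on unresolved questions about the Kauffman bracket skein module of the complement at $t=-1$ (absence of nilpotents, flatness/surjectivity of the quantization onto the coordinate ring of the character variety). So your second paragraph already asserts more than is known. Second, the reverse divisibility --- ruling out a recurrence of order strictly smaller than the $L$-degree of $A_K$, i.e.\ showing $\mathcal{A}_K$ is not strictly larger than $\mathcal{P}_K$ after localization --- has no general argument, and the asymptotic/$q$-holonomic approach you gesture at has only been carried out under strong hypotheses (e.g.\ irreducibility of $A'_K$ over $\mathbb{Z}[M,L]$ and conditions on the character variety, as in Le--Tran). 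For the figure eight knot in particular, the known verification proceeds not by your skein-theoretic route but by explicitly computing $\alpha_E$ from a closed formula for $J_E(n)$ and comparing it head-on with the known A-polynomial --- which is exactly the ``case-by-case'' fallback you name, and is all that is available at present.
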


The A-polynomial of a knot was introduced by Cooper et al. \cite{CCGLS}; it describes the $SL_2(\BC)$-character variety of the knot complement as viewed from the boundary torus. The A-polynomial always contains the factor $L-1$ coming from the abelian component of the character variety, so we can write $A_K = (L-1)A'_K$ where $A'_K \in \BZ[M,L].$

The AJ conjecture has been confirmed for the trefoil knot and figure eight knot \cite{Ga}, all torus knots \cite{Hi, Tr2013}, some classes of two-bridge knots and pretzel knots  \cite{Le, LT, LZ}, the knot $7_4$ \cite{GK}, and certain cable knots \cite{RZ, Ru, Tr2014, Tr2015a, Tr2017}. 

\subsection{Main result} For a finitely generated group $G$, let $\chi(G)$ denote the $SL_2(\BC)$-character variety of $G$, see \cite{CS}. Suppose  $G=\BZ^2$.
Every pair of generators $\mu, \lambda$ defines an isomorphism
between $\chi(G)$ and $(\BC^*)^2/\tau$, where $(\BC^*)^2$ is the
set of non-zero complex pairs $(M,L)$ and $\tau$ is the involution
$\tau(M,L):=(M^{-1},L^{-1})$. For an algebraic set $V$ (over $\BC$), let $\BC[V]$ denote the ring
of regular functions on $V$.  For example, $\BC[(\BC^*)^2/\tau]=
\ft^\sigma$, the $\sigma$-invariant subspace of  $\ft:=\BC[M^{\pm
1},L^{\pm 1}]$, where $\sigma(M^kL^l):= M^{-k}L^{-l}.$

Let $K$ be a knot in $S^3$ and $X:=S^3 \setminus K$ its complement. 
The boundary of
$X$ is a torus whose fundamental group  is free abelian of rank
two. An orientation of $K$ defines a unique pair of an
oriented meridian $\mu$ and an oriented longitude $\lambda$ such that the linking
number between the longitude and the knot is zero. The pair provides
an identification of $\chi(\pi_1(\partial X))$ and $(\BC^*)^2/\tau$
which actually does not depend on the orientation of $K$.

The inclusion $\partial X \hookrightarrow X$ induces an algebra homomorphism
$$\theta: \BC[\chi(\pi_1(\partial X))]  \equiv \ft^\sigma \longrightarrow
\BC[\chi(\pi_1(X))].$$
We call the kernel $\fp$ of $\theta$ the {\em A-ideal} of $K$; it is an ideal of $\ft^\sigma$. The A-ideal was first introduced in \cite{FGL}; it determines the A-polynomial of $K$. In fact $\fp=(A_K \cdot \ft)^{\sigma}$, the $\sigma$-invariant part of the ideal $A_K \cdot \ft \subset \ft$ generated by the A-polynomial $A_K$.

The involution $\sigma$ acts on the quantum torus $\CT$ also by $\sigma(M^kL^l)= M^{-k}L^{-l}$. Let $\CA_K^\sigma$ be the $\sigma$-invariant part of the recurrence ideal $\CA_K$; it is an ideal of $\CT^{\sigma}$. 

Sikora \cite{Si} formulated the following conjecture that relates the A-ideal and recurrence ideal of a knot in the 3-pshere.

\begin{conjecture} {\bf (Strong AJ conjecture)}
Suppose $K$ is a knot in $S^3$. Then $\sqrt{\ve(\CA_K^{\sigma})}=\fp.$
\end{conjecture}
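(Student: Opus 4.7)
The plan is to establish the two inclusions $\sqrt{\ve(\CA_K^{\sigma})} \subseteq \fp$ and $\fp \subseteq \sqrt{\ve(\CA_K^{\sigma})}$ separately, working inside the Kauffman bracket skein module framework. The ingredients I will rely on are the Frohman--Gelca--LoFaro identification of $\CT^{\sigma}$ at $t = -1$ with $\ft^{\sigma} = \BC[\chi(\pi_1(\partial X))]$, the interpretation of the colored Jones polynomial through the skein module $\CS(X)$ of the knot complement at $t = -1$, and the description $\fp = (A_K \cdot \ft)^{\sigma}$ of the A-ideal via the A-polynomial noted after the statement of the AJ conjecture.

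For the easier direction $\ve(\CA_K^{\sigma}) \subseteq \fp$, the idea is that any $\sigma$-invariant recurrence relation $P \cdot J_K = 0$ reduces at $t = -1$ to a skein-theoretic identity in $\CS(X)$. Under the identifications above, $\ve(P)$ lies in $\ft^{\sigma} = \BC[\chi(\pi_1(\partial X))]$, and $\theta(\ve(P))$ is computed by pushing $\ve(P)$ into $\CS(X)$ at $t = -1$ and then into $\BC[\chi(\pi_1(X))]$. The annihilation $P \cdot J_K = 0$ forces this image to vanish, so $\ve(P) \in \ker \theta = \fp$. Since $\fp$ is the vanishing ideal of an algebraic subset of $\chi(\pi_1(\partial X))$ and hence radical, the inclusion passes to radicals.

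For the harder reverse direction $\fp \subseteq \sqrt{\ve(\CA_K^{\sigma})}$, the strategy is to produce enough $\sigma$-invariant recurrence relations whose $t=-1$ reductions generate $\fp$ up to radical. The natural starting point is the recurrence polynomial $\alpha_K$ generating $\tilde\CA_K$; the ordinary AJ conjecture, which must be invoked or reproved as a side step, says that $\ve(\alpha_K)$ equals $A_K(M,L)$ times a polynomial in $M$ alone. Clearing $M$-denominators yields $\beta_K := c(M)\, \alpha_K \in \CA_K$, and the symmetrization $\beta_K + \sigma(\beta_K)$ lies in $\CA_K^{\sigma}$ (the symmetry $J_K(-n) = -J_K(n)$ guarantees that $\sigma$ preserves $\CA_K$). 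One then checks that $\ve(\beta_K + \sigma(\beta_K))$, modulo pure powers of $M$ that end up in the radical anyway, generates all of $(A_K \cdot \ft)^{\sigma} = \fp$.

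The main obstacle is that the recurrence ideal $\CA_K$ is rarely principal and is not explicitly presented for a general knot; only its extension $\tilde\CA_K$ is controlled by $\alpha_K$. Passing from $\alpha_K$ back to $\sigma$-invariant generators of $\CA_K$ in $\CT$, and showing that the resulting evaluations cover the whole symmetrized ideal $(A_K \cdot \ft)^{\sigma}$ rather than a proper subideal, requires a concrete enough model of $\CA_K$ to exhibit additional recurrence relations beyond $\alpha_K$. For a specific $K$ one computes such relations directly from a recursive presentation of $J_K$, identifies the $\sigma$-invariant part explicitly, and matches it against $\fp$ using the known A-polynomial; this is the route that must be carried out for any individual example, and it is precisely the step at which the conjecture is not yet established in the general case.
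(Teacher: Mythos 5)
First, note that the displayed statement is a conjecture for arbitrary knots; the paper does not prove it in that generality and only establishes it for the figure eight knot, so your outline has to be judged as a strategy for a specific knot, and both directions of it have genuine gaps. For $\sqrt{\ve(\CA_K^{\sigma})}\subseteq\fp$ you argue that any $\sigma$-invariant recurrence relation reduces at $t=-1$, via the skein module of the complement, to an element of $\ker\theta$. This presupposes that every element of $\CA_K$ is realized by the peripheral (Frohman--Gelca--LoFaro) ideal of the skein module; only the opposite containment is known in general, and even the reduction of the peripheral ideal into $\fp$ at $t=-1$ requires control of the skein module of $X$ that one does not have for an arbitrary knot. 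The paper avoids this entirely: it quotes the fact that the AJ conjecture holds for $E$ and that $A'_E$ has no nontrivial factor depending on $M$ alone, and then invokes Lemma 3.1 of \cite{Tr2015b} to obtain this inclusion. Your remark that this is the ``easier'' direction inverts the actual logical dependence: in this framework it is exactly the direction conditioned on the AJ conjecture.

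For the reverse inclusion, your plan --- clear denominators from $\alpha_K$, symmetrize, and claim the extraneous factors ``end up in the radical anyway'' --- does not work. Writing $\ve(c(M)\alpha_K)=g(M)A_K$ and using $\sigma(A_K)=\pm M^aL^bA_K$, the symmetrized element evaluates to $\bigl(g(M)\pm M^aL^bg(M^{-1})\bigr)A_K$; the radical of the ideal generated by such a product satisfies $\sqrt{(fg)}=\sqrt{(f)}\cap\sqrt{(g)}$ and is therefore \emph{contained in} $\sqrt{(A_K)}$, in general strictly. Extraneous factors shrink the ideal rather than disappear, so a single symmetrized multiple of $\alpha_K$ cannot be shown to generate $(A_K\cdot\ft)^{\sigma}$ up to radical. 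This is precisely why Lemma \ref{Plem} demands a symmetric $P$ with $PJ_K\in\CR[M^{\pm1}]$ and $\ve(P)=M^kL^l(A'_K)^{2m}$ \emph{exactly}, with the residual $L-1$ factor supplied separately by an operator $R$ satisfying $\ve(R)=(L+L^{-1}-2)^r$. Producing such a $P$ for the figure eight knot --- starting from the Charles--March\'e inhomogeneous recurrence $\alpha'_E$, multiplying by a rational operator $B$, and solving the resulting polynomial Diophantine equation with Groebner bases subject to the symmetric, polynomial and initial conditions --- is the entire content of the paper and is absent from your proposal; as you yourself concede, without carrying out that construction nothing is proved.
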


Here $\sqrt{\ve(\CA_K^{\sigma})}$ denotes the radical of the ideal $\ve(\CA_K^{\sigma})$ in the ring $\ft^{\sigma}$. The strong AJ conjecture has been confirmed for the trefoil knot \cite{Si}, all torus knots \cite{Tr2013} and most of their cables \cite{Tr2015b}. In this paper we prove the following.

\begin{thm}
\label{MTheorem}
The strong AJ conjecture holds true for the figure eight knot.
\end{thm}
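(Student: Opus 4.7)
The plan is to leverage the explicit recurrence polynomial $\alpha_K$ for the figure eight knot computed by Garoufalidis, which establishes the ordinary AJ conjecture as $\ve(\alpha_K)=c(M)\,A_K(M,L)$ for some $c(M)\in\BC[M]$, and to promote this to the $\sigma$-invariant statement by symmetrizing and then passing to the radical. The first key observation is that the recurrence ideal $\CA_K$ is itself $\sigma$-invariant: the reflection $(\tau f)(n):=-f(-n)$ on discrete functions intertwines with $\sigma$ via $\tau X\tau^{-1}=\sigma(X)$ for every $X\in\CT$, and the antisymmetry $J_K(-n)=-J_K(n)$ gives $\tau J_K=J_K$, so $P\in\CA_K$ forces $\sigma(P)\in\CA_K$. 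In particular, $P+\sigma(P)\in\CA_K^\sigma$ for every $P\in\CA_K$, providing a rich supply of $\sigma$-invariant recurrences to work with.

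First I would establish the easy inclusion $\ve(\CA_K^\sigma)\subseteq\fp$: any recurrence $P$ annihilating $J_K$ has classical limit $\ve(P)$ vanishing on the peripheral image of $\chi(\pi_1(X))$ in $\chi(\pi_1(\p X))=(\BC^*)^2/\tau$, and restricting to $\sigma$-invariants and taking radicals preserves this.

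The heart of the argument is the reverse inclusion $\fp\subseteq\sqrt{\ve(\CA_K^\sigma)}$. Here I would exploit the reciprocity $\sigma(A_K)=M^{-8}L^{-2}A_K$ of the figure eight A-polynomial, so that
\[
\ve\bigl(\alpha_K+\sigma(\alpha_K)\bigr) \;=\; \bigl(c(M)+c(M^{-1})M^{-8}L^{-2}\bigr)\,A_K \;=:\; h(M,L)\,A_K
\]
is a nonzero $\sigma$-invariant multiple of $A_K$ already inside $\ve(\CA_K^\sigma)$. Multiplying by elements of $\ft^\sigma$ and symmetrizing further left $\CT$-multiples $R\alpha_K+\sigma(R\alpha_K)$ (for suitable $R\in\CT$) would then populate $\ve(\CA_K^\sigma)$ with a family of $A_K$-multiples.

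The hard part, and where the figure eight case demands new input beyond the torus knot framework of \cite{Tr2013,Tr2015b}, is to clear the extraneous cofactor $h(M,L)$ upon passage to the radical. The plan is to construct a second symmetrized recurrence whose classical limit is a $\sigma$-invariant multiple $h'(M,L)\,A_K$ with $\gcd(h,h')$ supported only along the abelian-character locus (i.e., depending only on $M+M^{-1}$), by engineering an appropriate left $\CT$-multiple of $\alpha_K$ using its explicit degree-three structure. A Nullstellensatz argument on $\mathrm{Spec}(\ft^\sigma)$ in the spirit of \cite{Tr2013} will then identify the common zero locus of $\ve(\CA_K^\sigma)$ with that of $\fp$, and since $\fp$ is already radical (the A-polynomial of the figure eight being irreducible), this yields $\sqrt{\ve(\CA_K^\sigma)}=\fp$ as desired.
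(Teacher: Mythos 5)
Your preliminary observations are sound: the $\sigma$-invariance of $\CA_K$ via the intertwining of $\sigma$ with $f(n)\mapsto f(-n)$ is correct and standard, and the inclusion $\sqrt{\ve(\CA_E^{\sigma})}\subset \fp_E$ does hold --- though not for the reason you sketch, since ``classical limits of recurrences vanish on the peripheral image of the character variety'' is not a general theorem; here it is deduced from the AJ conjecture for $E$ \cite{Ga} together with the fact that $A'_E$ has no nontrivial factor depending on $M$ alone (this is \cite[Lemma 3.1]{Tr2015b}). The genuine gap is in the hard inclusion $\fp_E\subset\sqrt{\ve(\CA_E^{\sigma})}$. Exhibiting two symmetric recurrences with classical limits $hA_E$ and $h'A_E$ and arranging $\gcd(h,h')$ to depend only on $M+M^{-1}$ does not close the argument. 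First, a polynomial in $M+M^{-1}$ vanishes on vertical lines $M=\mathrm{const}$, not on the abelian locus $L=1$; these lines (and possible isolated points of $Z(h)\cap Z(h')$ beyond $Z(\gcd(h,h'))$) are not contained in $Z(A_E)$. Second, and more fundamentally, producing finitely many elements of $\ve(\CA_E^{\sigma})$ only bounds the common zero locus from above: you obtain $Z(\ve(\CA_E^{\sigma}))\subset Z(A_E)\cup\bigl(Z(h)\cap Z(h')\bigr)$, and since elements of $\fp_E$ need not vanish on the extra components, the Nullstellensatz gives no conclusion about $\fp_E\subset I\bigl(Z(\ve(\CA_E^{\sigma}))\bigr)$ until those components are ruled out --- that is, until you have a $\sigma$-invariant annihilator of $J_E$ whose classical limit is, up to a monomial in $M,L$, a power of $A_E$ with no extraneous cofactor. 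Your outline defers exactly this to ``engineering an appropriate left $\CT$-multiple of $\alpha_K$,'' with no construction and no evidence that the cofactor $c(M)+c(M^{-1})M^{-8}L^{-2}$ produced by symmetrization can be cancelled.

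That missing construction is the entire content of the paper's proof, and it proceeds quite differently: rather than symmetrizing the homogeneous recurrence, the paper starts from the Charles--March\'e inhomogeneous relation $\alpha'_E$ with $\alpha'_E J_E\in\CR[M^{\pm1}]$ \cite{CM}, looks for $P=B\alpha'_E$ with rational-function coefficients, and solves the constraints $\sigma(P)=P$, polynomiality of the coefficients, and $\ve(P)=(A'_E)^2$ as a Diophantine-type equation in $\BC[t^{\pm1},M^{\pm1}]$ by an explicit Gr\"obner-basis computation; the inhomogeneity is then removed by Lemma \ref{Plem} (based on \cite[Lemmas 3.2--3.3]{Tr2015b}), which converts $P$ into a genuine $\sigma$-invariant recurrence $Q$ with $\ve(Q)$ a monomial times a power of $(L-1)A'_E$, exactly the shape needed for the radical argument. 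A further small slip: $\fp_E$ is radical not because the A-polynomial is irreducible (it is not --- it contains the factor $L-1$), but because it is the kernel of a homomorphism into a reduced coordinate ring. As written, your proposal locates the difficulty but does not overcome it, so the proof is incomplete.
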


Let us outline the proof of Theorem \ref{MTheorem}. Suppose the AJ conjecture holds true for a knot $K \subset S^3$ and the A-polynomial $A'_K(M,L)$ does not have any nontrivial factors depending on $M$ only. Then it is known that $\sqrt{\varepsilon(\mathcal{A}^{\sigma}_K)}\subset \mathfrak{p}_K$. For the converse $\sqrt{\varepsilon(\mathcal{A}^{\sigma}_K)}\subset \mathfrak{p}_K$ it suffices to find a symmetric non-homogeneous recurrence polynomial $P \in\mathcal{R}[M^{\pm1},L^{\pm1}]$ such that $\varepsilon(P)=M^k L^l ( A'_K )^{2m}$ for some integers $k,l,m$, see Lemma \ref{Plem} below. The rest of the paper is devoted to constructing such an $P$ for the figure eight knot.  

%%%%%%%%%%%%%%%%%%%%%%%%%%%%%%%%%%%%%%%%%%%%%%%%%%%%%%%%%%%%%%%%%%%%%%%%%%%%%%%%%%%%%%%%%%%%%%%%%%%%%%%%%

\section{Proof of Theorem \ref{MTheorem}} 

For the figure eight knot $E$, it is known that the AJ conjecture holds true \cite{Ga} and the A-polynomial $A'_E = L+L^{-1}-M^{4} - M^{-4}+ M^{2} + M^{-2}+ 2$ does not have any nontrivial nontrivial factors depending on $M$ only. Hence, by \cite[Lemma 3.1]{Tr2015b}, $\sqrt{\varepsilon(\mathcal{A}^{\sigma}_E)}\subset \mathfrak{p}_E$. 

For the converse $\sqrt{\varepsilon(\mathcal{A}^{\sigma}_E)}\subset \mathfrak{p}_E$ we will apply the following.

\begin{lemma}\label{Plem}
Suppose  there exists $P \in\mathcal{R}[M^{\pm1},L^{\pm1}]$ such that $PJ_K\in \mathcal{R}[M^{\pm 1}]$, $\sigma(P)=P$ and $\varepsilon(P)=M^k L^l ( A'_K )^{2m}$ for some integers $k,l,m$. Then $\mathfrak{p}_K\subset \sqrt{\varepsilon(\mathcal{A}^{\sigma}_K)}$.

\begin{proof}
Since $PJ_K \in \mathcal{R}[M^{\pm 1}]$, by \cite[Lemma 3.3]{Tr2015b} there exists $R \in \CR[L^{\pm 1}]$ such that $\sigma(R)=R$, $\varepsilon(R)=(L+L^{-1}-2)^r$ and $RPJ_K=0$ for some integer $r  \ge 1$.

Let $Q=R^{m}P^r \in\mathcal{R}[M^{\pm1},L^{\pm1}]$. Then $QJ_K=0$, $\sigma(Q)=Q$ and $\varepsilon(Q)= \varepsilon(R)^{m} \varepsilon(P)^r = M^{kr}L^{lr-mr} ( (L-1)A'_K )^{2mr}$. Hence, by \cite[Lemma 3.2]{Tr2015b} we obtain $\mathfrak{p}_K\subset \sqrt{\epsilon(\mathcal{A}^{\sigma}_K)}$.
\end{proof}
\end{lemma}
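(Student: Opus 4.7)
The plan is to use the hypothesized $P$, together with a carefully chosen companion $R \in \CR[L^{\pm 1}]$, to build a single element $Q \in \CA_K^\sigma$ whose image under $\varepsilon$ has the shape $M^{k'}L^{l'}\bigl((L-1)A_K'\bigr)^{2N}$. Once such a $Q$ is produced, the auxiliary Lemma 3.2 of \cite{Tr2015b} immediately yields the desired containment $\mathfrak{p}_K \subset \sqrt{\varepsilon(\CA_K^\sigma)}$. The hypothesis $\varepsilon(P) = M^k L^l (A_K')^{2m}$ already supplies the factor $(A_K')^{2m}$; what is missing is a factor $(L-1)^{2m}$ corresponding to the abelian component of the A-polynomial, and this is precisely what $R$ will provide.

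To produce $R$, I would invoke Lemma 3.3 of \cite{Tr2015b}, whose input is exactly the hypothesis $P J_K \in \CR[M^{\pm 1}]$: this condition says that, as a function of $n$, $P J_K$ is a finite $\CR$-linear combination of the exponentials $\{t^{2ni}\}_i$, each of which is annihilated by a first-order operator in $L$. A symmetrization then produces a $\sigma$-invariant $R \in \CR[L^{\pm 1}]$ with $\varepsilon(R) = (L+L^{-1}-2)^r = L^{-r}(L-1)^{2r}$ for some $r\ge1$ such that $R\cdot(PJ_K) = 0$, i.e., $RP \in \CA_K$.

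Having both $R$ and $P$, I set $Q := R^{m}P^{r} \in \CT$ and verify three properties. \emph{Symmetry}: $\sigma$ is an algebra homomorphism of $\CT$ (the relation $LM = t^2 ML$ is preserved under $L\mapsto L^{-1}$, $M\mapsto M^{-1}$), so $\sigma(R)=R$ and $\sigma(P)=P$ together give $\sigma(Q)=Q$. \emph{Ideal membership}: since the subspace $\CR[M^{\pm 1}]$ is $\CT$-stable, $P^{r}J_K$ remains in $\CR[M^{\pm 1}]$, and the annihilation property of $R$ propagates to give $R^{m}P^{r}J_K = 0$. \emph{Evaluation at $t=-1$}: because $\varepsilon$ collapses $\CT$ to the commutative ring $\ft$, one computes
$$\varepsilon(Q) = \varepsilon(R)^{m}\varepsilon(P)^{r} = L^{-rm}(L-1)^{2rm}\cdot M^{kr}L^{lr}(A_K')^{2mr} = M^{kr}L^{lr-rm}\bigl((L-1)A_K'\bigr)^{2rm},$$
which is exactly the required shape. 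Applying Lemma 3.2 of \cite{Tr2015b} to $Q$ then completes the proof. The main obstacle is the ideal-membership step: $R$ and $P$ do not commute in $\CT$, so the relation $RPJ_K=0$ does not algebraically imply $R^{m}P^{r}J_K=0$ by purely formal left-ideal manipulations. The point that makes it work is the structural fact that $P$ preserves $\CR[M^{\pm 1}]$, so the whole trajectory $PJ_K, P^2 J_K, \ldots, P^r J_K$ remains in the subspace on which $R$ is known (from the construction in Lemma 3.3) to act as a nilpotent annihilator.
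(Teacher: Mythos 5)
Your argument is essentially identical to the paper's proof: both invoke \cite[Lemma 3.3]{Tr2015b} to produce the $\sigma$-invariant annihilator $R$ with $\varepsilon(R)=(L+L^{-1}-2)^r$, form $Q=R^mP^r$, compute $\varepsilon(Q)=M^{kr}L^{lr-mr}\bigl((L-1)A'_K\bigr)^{2mr}$, and conclude via \cite[Lemma 3.2]{Tr2015b}. The only difference is that you explicitly flag, and sketch a justification for, the step $R^mP^rJ_K=0$, which the paper asserts without comment.
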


By Lemma \ref{Plem} we will  find $P \in\mathcal{R}[M^{\pm1},L^{\pm1}]$ such that $PJ_E\in \mathcal{R}[M^{\pm 1}]$, $\sigma(P)=P$ and $\varepsilon(P)= ( A'_E )^{2}$. Note that 
$$(A'_E)^2=\tilde a_{2}(M)L^2 + \tilde a_{-2}(M)L^{-2} + \tilde a_1(M)L + \tilde a_{-1}(M)L^{-1} + \tilde a_0(M),$$
where
\begin{eqnarray*}
\tilde a_2(M) &=& \tilde a_{-2}(M)  = 1,\\
\tilde a_1(M) &=& \tilde a_{-1}(M) = -2 M^{4} - 2 M^{-4} + 2 M^{2} + 2 M^{-2} + 4,\\
\tilde a_0(M) &=&M^{8} + M^{-8} - 2 M^{6} - 2 M^{-6} - 3 M^{4} - 3 M^{-4} + 2 M^{2} + 2 M^{-2} + 10.
\end{eqnarray*}

To find $P$, we  start with a non-homogeneous recurrence polynomial for $J_E$. Let 
$$
\alpha'_E(t,M,L)=a_1(t,M)L+a_{-1}(t,M)L^{-1}+a_0(t,M)
$$
where
\begin{eqnarray*}
a_1(t,M)&=& t^{-2} M^2 - t^2 M^{-2},\\
a_{-1}(t,M)&=& t^2 M^2 - t^{-2} M^{-2},\\
a_0(t,M)&=& (M^2 - M^{-2}) (-M^4 - M^{-4} + M^2 + M^{-2} + t^4 + t^{-4}).
\end{eqnarray*}
By \cite[Proposition 4.4]{CM}  we have $\alpha'_E J_E\in \CR[M^{\pm 1}]$.

Observe that if $B(t, M, L)$ is a rational function such that $B\alpha'_E$ is a Laurent polynomial in $t, M, L$ then $B\alpha'_E$ is another non-homogeneous recurrence polynomial for $J_E$. Namely $B \alpha'_E J_E\in \CR[M^{\pm 1}]$. This suggests that we may consider $P \in\mathcal{R}[M^{\pm1},L^{\pm1}]$ of the form $P=B \alpha'_E$ where 
$$
B(t,M,L) = b_1(t,M)L+b_{-1}(t,M)L^{-1}+b_0(t,M)
$$
and $b_i$'s are \textit{rational functions} in $t$ and $M$. Explicitly we have 
$$
P(t, M, L) = p_2(t,M)L^2+p_{-2}(t,M)L^{-2}+p_1(t,M)L+p_{-1}(t,M)L^{-1}+p_{0}(t,M)
$$
where
\begin{eqnarray*}
p_2(t,M) &=& b_1(t, M) a_1(t, t^2 M), \\
p_{-2}(t,M) &=& b_{-1}(t, M) a_{-1}(t, t^{-2} M), \\
p_{1}(t,M) &=& b_0(t, M) a_1(t, M) + b_1(t, M) a_0(t, t^2 M), \\
p_{-1}(t,M) &=& b_0(t, M) a_{-1}(t, M) + b_{-1}(t, M) a_0(t, t^{-2} M), \\
p_0(t,M) &=& b_0(t, M) a_0(t, M) + b_{-1}(t, M) a_1(t, t^{-2} M) + b_1(t, M) a_{-1}(t, t^2 M).
\end{eqnarray*}

Since $P J_K = B\alpha'_E J_K \in \mathcal{R}[M^{\pm 1}]$,  the remaining conditions for $P$  are $\sigma(P)=P$ and $\varepsilon(P)=( A'_E )^{2}$. These conditions are equivalent to the followings
\begin{eqnarray}
\label{eq: e2}
p_2(t,M)&=&p_{-2}(t,M^{-1}),\\
\label{eq: e1}
p_1(t,M)&=&p_{-1}(t,M^{-1}),\\
\label{eq: e0}
p_0(t,M)&=&p_{0}(t,M^{-1}),
\end{eqnarray}
and
\begin{eqnarray}
p_2(1,M)&=& \tilde{a}_{2}(M),  \label{eq: 2}\\
p_{-2}(1,M)&=&\tilde{a}_{-2}(M),  \label{eq: -2} \\
p_1(1,M)&=& \tilde{a}_{1}(M), \label{eq: 1}\\
p_{-1}(1,M)&=& \tilde{a}_{-1}(M) \label{eq: -1},\\
p_0(1,M)&=& \tilde{a}_{0}(M). \label{eq}
\end{eqnarray}

We will call the first set of conditions the \textit{symmetric conditions} and the second set the \textit{initial conditions}. We also note that $p_i$'s must be polynomials in $t, M, L$, and we call these conditions the \textit{polynomial conditions}.  We now give a detailed method for solving the system of equations \eqref{eq: e2}--\eqref{eq}.

\subsection{Step 1: Applying symmetric conditions}
We first choose the leading coefficients $p_2$ and $p_{-2}$ to be the same as those of the A-polynomial squared. Namely we let
$$p_2(t,M)= \tilde{a}_2(M)=1 \hspace{4mm}\text{and}\hspace{4mm} p_{-2}(t,M)=\tilde{a}_{-2}(M)=1.$$
We see that this takes care of equation \eqref{eq: e2}. Then
$$
b_1(t,M) = \frac{1}{a_1(t,t^2 M)}  \hspace{4mm}\text{and}\hspace{4mm}
b_{-1}(t,M) = \frac{1}{a_{-1}(t,t^{-2} M)}. 
$$
This implies that 
\begin{eqnarray}
p_1(t,M) &=& a_1(t, M) b_0(t, M) + \frac{a_0(t, t^2 M)}{a_1(t, t^2 M)},  \label{eq:q1}\\
p_{-1}(t,M) &=&  a_{-1}(t, M) b_0(t, M) +  \frac{a_0(t, t^{-2} M) }{a_{-1}(t,t^{-2} M)}, \\
p_0(t,M) &=& a_0(t, M) b_0(t, M)  + \frac{a_1(t, t^{-2} M) }{a_{-1}(t,t^{-2} M)} +  \frac{a_{-1}(t, t^2 M)}{a_1(t,t^2 M)}. \label{eq:q0}
\end{eqnarray}

Since $a_1(t,M) = - a_{-1}(t,M^{-1})$ and $a_0(t,M) = - a_0(t,M^{-1})$, we see that equations $p_1(t,M)=p_{-1}(t,M^{-1})$ and $p_0(t,M)=p_0(t,M^{-1})$ are equivalent to a single equation
\be\label{eq: b0}
b_0(t,M^{-1})=-b_0(t,M).
\ee
Hence conditions \eqref{eq: e1} and \eqref{eq: e0} become equation \eqref{eq: b0}.

\subsection{Step 2: Applying polynomial conditions}

We now use the fact that $p_i$'s must be Laurent polynomials. This imposes additional conditions on $b_0$, but it is hard to work with rational equations in terms of $b_0$. So  we will work with equations in terms of $p_i$'s directly. To do this, we first express $b_0$ in terms of $p_i$'s.

By solving for $b_0$ from equation \eqref{eq:q1} we have
$$b_0(t,M)=-\frac{a_0(t,t^2 M)-a_1(t,t^2 M) p_1(t,M)}{a_1(t,M)a_1(t,t^2 M)}.$$

Substituting this into equation \eqref{eq:q0}, we obtain the following Diophantine equation in a polynomial ring
\begin{equation}
\label{eq:Dio1}
A_1(t,M)p_0(t,M)+B_1(t,M)p_1(t,M)=C_1(t,M)    
\end{equation}
where 
\begin{eqnarray*}
A_1(t,M)&=&-a_1(t, M) a_1(t,  t^2 M) a_{-1}(t, t^{-2}M),\\
B_1(t,M)&=&a_0(t, M) a_1(t,  t^2 M) a_{-1}(t,  t^{-2} M),\\
C_1(t,M)&=&-a_1(t, M) a_1(t, t^{-2} M) a_1(t,  t^2 M) +a_0(t, M) a_0(t,  t^2 M) a_{-1}(t, t ^{-2} M) \\
&&- \, a_1(t, M) a_{-1}(t, t^{-2} M) a_{-1}(t,  t^2 M).
\end{eqnarray*}
Note that in arriving at this equation, we did not use  $b_0(t,M^{-1})=-b_0(t,M)$. This means that we will have to implement this condition later.

\subsection{Step 3: Checking if  equation \eqref{eq:Dio1} has a solution}

We see that  equation \eqref{eq:Dio1}
is satisfied if and only if $C_1(t,M)$ is in the ideal  generated by $A_1(t,M)$ and $B_1(t,M)$. This type of problem is known as the ideal membership problem. One way to solve the ideal membership problem is to use Groebner bases.

For calculation purposes we raise the powers of $t$ and $M$. We do this by multiplying by factors of $t$ and $M$. Let
\begin{eqnarray*}
A_2(t,M)&=& t^8M^{10} A_1(t,M),\\
B_2(t,M)&=& t^8 M^{10} B_1(t,M),\\
C_2(t,M)&=& t^8M^{10} C_1(t,M).
\end{eqnarray*}
By a direct calculation we can find a Groebner basis of the ideal generated by $A_2(t,M)$ and $B_2(t,M)$. Indeed we use the following command in Mathematica
$$\{\mathrm{gb}, \mathrm{mat}\} = 
 \mathrm{GroebnerBasis`BasisAndConversionMatrix}[{A_2(t,M), B_2(t,M)}, {t, M}, \{\}]$$
where $\mathrm{gb}$ is a Groebner basis obtained from $A_2,B_2$, and $\mathrm{mat}$ is the matrix that allows us to write the Groebner basis in terms of $A_2,B_2$. Namely 
$$\mathrm{mat}.\{A_2,B_2\}-\mathrm{gb}=0.$$

Explicitly, we have the following Groebner basis $\mathrm{gb}=\{g_1,g_2,g_3,g_4\}$ for $\la A_2, B_2 \ra$
\begin{eqnarray*}
g_1&=&-M^{14} (-1 + M^8) (t^4 + M^8 t^4 - M^4 (1 + t^8)), \\
g_2&=&-M^{10} (-1 + M^8) t^2 (t^4 + M^8 t^4 - M^4 (1 + t^8)), \\
g_3&=&(-1 + M^4)  (M - t) (M + t) (-1 + M t) (1 + M t) (M^2 + t^2) (1 + M^2 t^2) \\
   &&\times (M^4 + M^6 + M^8 + 2 M^{10} + 3 M^{12} + M^{14} + 2 M^{16} - t^4), \\
g_4&=&t^2 (-M + t) (M + t) (-1 + M t) (1 + M t) (M^2 + t^2) (1 + M^2 t^2)\\
   &&\times(-M^4 - M^6 - M^8 + M^{14} + M^{16} + t^4).
\end{eqnarray*}
For the matrix $\mathrm{mat} = \{m_{ij}\}$ we have 
\begin{eqnarray*}
m_{11}&=& (-1 + M^4)  t^2 \times(1 - 2 M^2 + M^4 - 2 M^6 + 2 M^8 - 
   M^{10} \\
   &&\qquad \qquad \qquad \qquad \qquad \quad \,+ \, M^{12} - M^{14} - M^4 t^4 + M^6 t^4 + M^{10} t^4),\\
m_{12}&=& -M^4 (M^6 + M^{10} + t^4 - M^2 t^4 - M^6 t^4),\\
m_{21}&=& -(-1 + M^4)  (1 - M^2 - M^6 - M^8 - M^6 t^4 - 
   M^{10} t^4 + M^{12} t^4 - M^8 t^8),\\
m_{22}&=& M^4 t^2 (-1 + M^8 - M^4 t^4),\\
m_{31}&=& (-1 + M^4)  t^2 \times(-4 + M^2 - 2 M^4 + 3 M^6 - 2 M^8 + 
   M^{10} \\
   && \qquad \qquad \qquad \qquad \qquad \quad \, \, \,- \, 2 M^{12} + 3 M^4 t^4 + M^6 t^4 + 2 M^8 t^4), \\
m_{32}&=& 1 + M^2 + M^4 + 2 M^6 + 3 M^8 + M^{10} + 2 M^{12} - 3 M^4 t^4 - M^6 t^4 - 
 2 M^8 t^4, \\
m_{41}&=& -2 - M^2 - M^4 + 2 M^8 + M^{10} - M^2 t^4 - M^4 t^4 + M^8 t^4 + 
 2 M^{10} t^4 \\ 
 && - \, M^{14} t^4 - M^4 t^8 - M^6 t^8 + M^8 t^8 + M^{10} t^8,\\
m_{42}&=& -(1 + M^2) t^2 (1 - M^8 + M^4 t^4).
\end{eqnarray*}

Now to check if $C_2$ is in the deal $\la A_2,B_2 \ra$  we use the command
$$\{q,r\}=\mathrm{PolynomialReduce}[C_2, \mathrm{gb},{t,M}].$$
One would see that $r=0$, which means that
$$C_2=q_1 g_1 + q_2 g_2 + q_3 g_3 + q_4 g_4$$
and hence $C_2$ is in the ideal $\la A_2,B_2 \ra$. The explicit values of $q_i$'s are
\begin{eqnarray*}
q_1 &=& M^{-4} t^{-10}(3 + M^2 + 2 M^4 + t^4 - M^2 t^4 - M^4 t^4 - 2 M^6 t^4 - 4 t^8 - 
  6 M^2 t^8 \\
  &&- 7 M^4 t^8 - 7 M^6 t^8 - 4 M^8 t^8 - 4 M^{10} t^8 - 
  5 t^{12} - 7 M^2 t^{12} - 6 M^4 t^{12}- 7 M^6 t^{12} \\
  &&- 2 M^8 t^{12} - 4 t^{16} - 6 M^2 t^{16} - 3 M^4 t^{16} - 5 M^6 t^{16} - M^8 t^{16} - 
  2 M^{10} t^{16} - t^{20} \\
  &&- 2 M^2 t^{20} - 2 M^4 t^{20} - 3 M^6 t^{20} - 
  2 M^8 t^{20} + M^4 t^{24} + M^6 t^{24} + M^8 t^{24} + 2 M^{10} t^{24}),\\
q_2 &=& -M^{-4} t^{-8}(-1 - M^2 + 4 t^4 + 8 M^2 t^4 + 4 t^8 + 6 M^2 t^8 + 4 t^{12} + 
 7 M^2 t^{12} - t^{20}),\\
q_3 &=& M^{-4} t^{-10}(-1 + M^2 + M^6 + M^2 t^4 - M^8 t^4 - 2 t^8 - M^4 t^8 - M^6 t^8 - 
  2 M^8 t^8 \\
  &&- M^{10} t^8- 2 M^{12} t^8 - 2 t^{12} - M^4 t^{12} - M^6 t^{12} - 
  3 M^8 t^{12} - M^{10} t^{12} - 2 t^{16} \\
  && - 2 M^4 t^{16}- 2 M^8 t^{16} - 
  M^{12} t^{16} - M^8 t^{20} - M^{10} t^{20} + M^{12} t^{24}),\\
q_4 &=& -2 M^{-4} t^{-4}(1 - t + t^2) (1 + t + t^2) (1 - t^2 + t^4).
\end{eqnarray*}
 
\subsection{Step 4: Finding solutions of equation \eqref{eq:Dio1} }

We see that  equation \eqref{eq:Dio1} 
is reminiscent of a Diophantine equation.
Recall for $a,b,c,x,y\in \BZ$, the equation
$$ax+by=c$$
has the following solutions
$$
x = x_0 + \frac{b}{\gcd(a,b)}k, \qquad
y=y_0 - \frac{a}{\gcd(a,b)}k,
$$
where $(x_0,y_0)$ is a particular solution of the equation, $k\in \BZ$, and $\gcd(a,b)$ is the greatest common divisor of $a,b$.

We will apply the same idea to our problem, except that our ring is a polynomial ring. Given a particular solution $(\tilde p_0, \tilde p_1)$ of equation \eqref{eq:Dio1}, a family of solutions is given by
\begin{eqnarray*}
p_0(t,M)&=&\tilde p_0(t, M) - \frac{f(t, M) B_2(t, M)}{\gcd(A_2,B_2)},\\
p_1(t,M)&=&\tilde p_1(t, M) + \frac{f(t, M) A_2(t, M)}{\gcd(A_2,B_2)},
\end{eqnarray*}
where 
$\gcd(A_2,B_2)$ is the greatest common divisor between $A_2,B_2$, and $f$ is an arbitrary polynomial. 
To find a particular solution $(\tilde p_0, \tilde p_1)$, we express $C_2$ in terms of the Groebner basis $\mathrm{gb}$ and then we express the Groebner basis in terms of $A_2,B_2$ using the matrix $\mathrm{mat} = \{m_{ij}\}$. Indeed, recall that $C_2=q_1 g_1 + q_2 g_2 + q_3 g_3 + q_4 g_4$. Moreover
 \begin{eqnarray*}
g_1 &=& m_{11} A_2 + m_{12} B_2,\\
g_2 &=& m_{21} A_2 + m_{22} B_2,\\
g_3 &=& m_{31} A_2 + m_{32} B_2,\\
g_4 &=& m_{41} A_2 + m_{42} B_2.
\end{eqnarray*}
Hence $C_2 = A_2 \tilde p_0+B_2 \tilde p_1$
where
\begin{eqnarray*}
\tilde p_0 &=& q_1 m_{11}+q_2 m_{21} + q_3 m_{31} + q_4 m_{41},\\
\tilde p_1 &=& q_1 m_{12}+q_2 m_{22} + q_3 m_{32} + q_4 m_{42}.
\end{eqnarray*}
Explicitly, we have
\begin{eqnarray*}
\tilde p_0 &=&\frac{1}{M^4 t^{10}} (-M^4 t^2 - M^6 t^2 - M^8 t^2 + 2 M^{12} t^2 + M^{14} t^2 + t^6
   - M^2 t^6 \\
   && + \, 14 M^4 t^6 + 5 M^6 t^6 - 2 M^8 t^6 + 3 M^{10} t^6 - 
     10 M^{12} t^6 - 6 M^{14} t^6 - M^{16} t^6 \\
     && - \, M^{18} t^6 - 12 t^{10} + 
     6 M^2 t^{10} + 18 M^4 t^{10} + 17 M^6 t^{10} + 6 M^8 t^{10}- 12 M^{10} t^{10} \\
     && - \, 14 M^{12} t^{10} - 19 M^{14} t^{10} + 4 M^{16} t^{10} + 8 M^{18} t^{10} - 13 t^{14}+ 6 M^2 t^{14} \\
     && + \,  35 M^4 t^{14} + 23 M^6 t^{14} + 
     2 M^8 t^{14} - 10 M^{10} t^{14} - 26 M^{12} t^{14} -  25 M^{14} t^{14} \\
     && + \,  4 M^{16} t^{14} + 6 M^{18} t^{14} - 12 t^{18} + 6 M^2 t^{18} + 20 M^4 t^{18} + 17 M^6 t^{18} +  3 M^8 t^{18} \\
     && - \,  12 M^{10} t^{18} - 15 M^{12} t^{18} - 
     18 M^{14} t^{18} + 4 M^{16} t^{18} + 7 M^{18} t^{18} - M^2 t^{22} \\
     &&  + \,  13 M^4 t^{22} + 7 M^6 t^{22} - 3 M^8 t^{22} - 10 M^{12} t^{22} - 6 M^{14} t^{22} + 2 M^4 t^{26} -  M^6 t^{26} \\
     && - \, 2 M^8 t^{26} - M^{10} t^{26} + 
     2 M^{12} t^{26} + 2 M^{14} t^{26} - 2 M^{16} t^{26} - 2 M^8 t^{30}+ 2 M^{12} t^{30}), \\
\tilde p_1&=&\frac{1}{M^4 t^{10}}
   (-1 + M^2 t^4 + M^8 t^4 + M^{10} t^4 + 2 M^{12} t^4 + M^{14} t^4 + 
     M^6 t^8\\
     && - \, 14 M^8 t^8 - 7 M^{10} t^8 - 12 M^{12} t^8 - 8 M^{14} t^8 + 
     13 M^4 t^{12} + 6 M^6 t^{12} - M^8 t^{12} \\
     && + \, 2 M^{10} t^{12} - 10 M^{12} t^{12} - 6 M^{14} t^{12} + 13 M^4 t^{16} + 7 M^6 t^{16} - 2 M^8 t^{16} \\
     &&- \, 11 M^{12} t^{16} - 7 M^{14} t^{16} + 12 M^4 t^{20} + 
     6 M^6 t^{20} + 10 M^8 t^{20} + 6 M^{10} t^{20} \\
     && + \, M^6 t^{24} + 2 M^{12} t^{24} - 2 M^8 t^{28}).
\end{eqnarray*}

\subsection{Step 5: Imposing condition \eqref{eq: b0} on solutions}

We now implement the condition $b_0(t,M^{-1})=-b_0(t,M)$. Since $$b_0(t,M)=-\frac{a_0(t,t^2 M)-a_1(t,t^2 M) p_1(t,M)}{a_1(t,M)a_1(t,t^2 M)},$$ we see that $b_0(t,M^{-1})=-b_0(t,M)$ is equivalent to
\begin{eqnarray*}
&& (M^8 t^{14} - M^4 t^{10}) p_1(t, M) - (M^8 t^{10} - M^4 t^{14}) p_1(t, M^{-1}) \\
&=&
  1 + M^{12} - M^2 t^4 - M^4 t^4 - M^8 t^4 - M^{10} t^4 - M^4 t^8 - 
   M^8 t^8 + M^4 t^{16} \\
   && + \,  M^8 t^{16} + M^2 t^{20} + M^4 t^{20} + M^8 t^{20} + 
   M^{10} t^{20} - t^{24} - M^{12} t^{24}.
\end{eqnarray*}

By replacing $p_1(t,M)=\tilde p_1(t,M)+\frac{f(t,M)A_2(t,M)}{\gcd(A_2, B_2)}$, the above equation is equivalent to
\begin{eqnarray}\label{eq:f}
&&
M^{14} t^8 f(t, M) + M^2 t^8 f(t, M^{-1}) \\
&=& \nonumber
-1 - 2 M^2 - M^4 - M^6 - M^{10} - M^{12} - 2 M^{14} - M^{16}  \nonumber\\  
    && + \, 8 t^4 + 12 M^2 t^4 + 6 M^4 t^4 + 12 M^6 t^4
    - 2 M^8 t^4 + 12 M^{10} t^4 + 6 M^{12} t^4 + 12 M^{14} t^4  \nonumber\\
    && + \,  8 M^{16} t^4 + 6 t^8 + 10 M^2 t^8 + 6 M^4 t^8 + 13 M^6 t^8 + 13 M^{10} t^8 + 6 M^{12} t^8 + 
    10 M^{14} t^8  \nonumber\\
    && + \,  6 M^{16} t^8 + 7 t^{12} + 11 M^2 t^{12} + 6 M^4 t^{12} + 
    12 M^6 t^{12} + 12 M^{10} t^{12} + 6 M^{12} t^{12} \nonumber\\
    && + \, 11 M^{14} t^{12} + 7 M^{16} t^{12} + M^4 t^{16} + M^{12} t^{16} - 2 M^2 t^{20} - 2 M^{14} t^{20}. \nonumber
\end{eqnarray}

\subsection{Step 6: Applying initial  conditions} Now we take into account the initial conditions \eqref{eq: 2}--\eqref{eq}. Firstly, we note that equations \eqref{eq: 2} and \eqref{eq: -2} hold true since we chose $p_2(t,M)=  p_{-2}(t,M)=1.$ Secondly, by taking $t=1$ in $p_1(t,M) = p_{-1}(t,M^{-1})$ we have $p_1(1,M) = p_{-1}(1,M^{-1})$. This implies that $p_1(1,M) = \tilde{a}_{1}(M)$ if and only if $p_{-1}(1,M)= \tilde{a}_{-1}(M)$. Meaning equations  \eqref{eq: 1} and \eqref{eq: -1} are equivalent. 

Thirdly,  by taking $t=1$ in equation \eqref{eq:Dio1} we have 
$$A_1(1,M)p_0(1,M)+B_1(1,M)p_1(1,M)=C_1(1,M).$$
By a direct calculation we can check that  $A_1(1,M)\tilde{a}_{0}(M)+B_1(1,M) \tilde{a}_{1}(M)=C_1(1,M)$. This implies that $p_1(1,M) = \tilde{a}_{1}(M)$ if and only if
$p_0(1,M)= \tilde{a}_{0}(M)$. Meaning equations  \eqref{eq: 1} and \eqref{eq} are equivalent. Hence we have shown that the initial conditions \eqref{eq: 2}--\eqref{eq} reduce to a single equation $p_1(1,M) = \tilde{a}_{1}(M)$. 

Since $$p_1(1,M)=\tilde p_1(1,M)+\frac{f(1,M)A_2(1,M)}{\gcd(A_2(1,M), B_2(1,M))},$$ by a direct calculation we see that the condition $p_1(1,M) = \tilde a_1(M)$ is equivalent to $f(1, M) = M^{-8} - M^{-6} + 35 M^{-4} + 18 M^{-2} + 29 + 20 M^{2}$. This suggests that we may consider the polynomial $f(t,M)$ in equation \eqref{eq:f} of the following form
$$f(t, M) = M^{-8} - M^{-6} + 35 M^{-4} + 18 M^{-2} + 29 + 20 M^{2} + (t^2 - 1) h(t, M)$$
for some  unknown function $h(t,M)$. Now it remains to find $h(t,M)$ explicitly.

With the change of variable $h(t, M) = t^8 M^{-6} k(t, M)$, equation \eqref{eq:f} becomes  
\begin{eqnarray*}
&& k(t, M) + k(t, M^{-1}) \\
&=&  2 t^4 + 2 t^6 + (1 + t^2 - 11 t^4 - 11 t^6 + 12 t^8 + 12 t^{10}) (M^2 +M^{-2}) \\ 
 && + \, (1 + t^2 - 5 t^4 - 5 t^6 + 7 t^8 + 7 t^{10} + t^{12} + t^{14}) (M^4 + M^{-4}) 
 \\
 && + \, (2 + 2 t^2 - 10 t^4 - 10 t^6 + 9 t^8 + 9 t^{10} - 2 t^{12} - 2 t^{14} - 
  2 t^{16} - 2 t^{18}) (M^6 + M^{-6}) \\
  && + \,  (1 + t^2 - 7 t^4 - 7 t^6 + 7 t^8 + 7 t^{10}) (M^8 + M^{-8}).
\end{eqnarray*}
It is easy to see that a solution of this equation is
\begin{eqnarray*}
k(t, M) 
&=&   t^4 +  t^6 + (1 + t^2 - 11 t^4 - 11 t^6 + 12 t^8 + 12 t^{10}) M^2  \\ 
 && + \, (1 + t^2 - 5 t^4 - 5 t^6 + 7 t^8 + 7 t^{10} + t^{12} + t^{14}) M^4 
 \\
 && + \, (2 + 2 t^2 - 10 t^4 - 10 t^6 + 9 t^8 + 9 t^{10} - 2 t^{12} - 2 t^{14} - 
  2 t^{16} - 2 t^{18}) M^6 \\
  && + \,  (1 + t^2 - 7 t^4 - 7 t^6 + 7 t^8 + 7 t^{10}) M^8.
\end{eqnarray*}

With this we have an explicit solution $f(t,M)$ and hence an explicit solution $b_0$. More precisely, we have
$$
b_0(t, M) = -\frac{(-1 + M) (1 + M) (1 + M^2) t^4 (-M^2 - M^4 - M^6 + 
     t^4 + M^8 t^4 - M^4 t^8)}{
  M^2 (M - t) (M + t) (-1 + M t) (1 + M t) (M^2 + t^2) (1 + M^2 t^2)}.
$$

\subsection{Step 7: Explicit solution} The polynomial we are constructing is
$$P(t, M, L)=L^2+L^{-2}+p_1(t,M)L+p_{-1}(t,M)L^{-1}+p_{0}(t,M)$$
where
\begin{eqnarray*}
p_1(t,M) &=& a_1(t, M) b_0(t, M) + \frac{a_0(t, t^2 M)}{a_1(t, t^2 M)},  \\
p_{-1}(t,M) &=&  a_{-1}(t, M) b_0(t, M) +  \frac{a_0(t, t^{-2} M) }{a_{-1}(t,t^{-2} M)}, \\
p_0(t,M) &=& a_0(t, M) b_0(t, M)  + \frac{a_1(t, t^{-2} M) }{a_{-1}(t,t^{-2} M)} +  \frac{a_{-1}(t, t^2 M)}{a_1(t,t^2 M)}.
\end{eqnarray*}
Having now an explicit expression for $b_0$, we obtain 
\begin{eqnarray*}
p_1(t,M)&=&\frac{-1 - t^{16} + M^4 t^8 (1 + t^4)^2 + M^2 (t^4 + t^{12}) + 
 M^6 (t^{8} + t^{16}) - M^8 (t^{12} + t^{20})}{M^{4}t^{10}}, \\
 p_{-1}(t,M)&=&\frac{M^4 t^8 (1 + t^4)^2 - t^{12} (1 + t^8) + M^6 (t^4 + t^{12}) - M^8 (1 + t^{16}) + M^2 (t^8 + t^{16})}{M^4t^{10}}, \\
p_0(t,M)&=&\frac{1}{M^8t^4}\big(t^8 + M^{16} t^8 - M^4 t^8 (2 + t^4) - M^{12} t^8 (2 + t^4) - 
 M^2 (t^4 + t^8)\\
 && + \, M^6 (t^4 + t^8) + M^{10} (t^4 + t^8) - M^{14} (t^4 + t^8) + 
 2 M^8 (1 + t^4 + 2 t^8 + t^{12})\big).
\end{eqnarray*}

Then the polynomial $P \in\mathcal{R}[M^{\pm1},L^{\pm1}]$ constructed above satisfies $PJ_E\in \mathcal{R}[M^{\pm 1}]$, $\sigma(P)=P$ and $\varepsilon(P)= ( A'_E )^{2}$. We will also verify $PJ_E\in \mathcal{R}[M^{\pm 1}]$  directly in the next subsection. Hence, by Lemma \ref{Plem} we have $\sqrt{\varepsilon(\mathcal{A}^{\sigma}_E)}\subset \mathfrak{p}_E$. This completes the proof of the strong AJ conjecture for the figure eight knot. 

\subsection{Double-checking the recurrence relation}
 Following \cite{CM}, for $Q\in \BC[t,M,x]$ we define $\left\langle  Q\right\rangle: \BZ \rightarrow \mathcal{R}=\BC[t^{\pm 1}]$ by
$$\left\langle Q \right\rangle(n):= [n]\sum^{n-1}_{k=0}P(t,t^{2n},t^{4k})\prod^{k}_{l=1}(t^{4n}+t^{-4n}-t^{4l}-t^{-4l})$$
for $n\geq 1$, and $\left\langle Q \right\rangle(-n)=-\left\langle Q \right\rangle(n)$ for $n\leq 0$.
It is known that the colored Jones polynomial of the figure eight knot is $J_E = \la 1 \ra$. 

For $Q, R\in \BC[t,M,x]$, we write $Q\equiv R$ if $Q-R\in \BC[t,M]$. Then we have the following.
\begin{lemma}\label{ALemma}
\begin{eqnarray*}
L\left\langle 1\right\rangle &\equiv& \left( t^{-2}M^{-4}-t^{-2}M^{-2}-t^2 \right)\left\langle 1\right\rangle 
+ \left( t^6M^2-t^2M^{-2} \right)\left\langle x\right\rangle,\\
L^2\left\langle 1\right\rangle &\equiv&
\big\{ 
t^{-12}M^{-8} 
- \left(t^{-12} + t^{-8}\right)M^{-6} 
- t^{12}M^4 - (t^{-4}+1)M^{-4}\\
&&  \qquad \qquad \,\,\, + \, \left(t^{-4} + 1\right)M^{-2} +t^8 + t^4 +1
\big\} \left\langle 1\right\rangle \\
&&  + \, \big \{
t^{16}M^6 - t^{-8}M^{-6} - t^{12}M^{4} + t^{-4}M^{-4} 
- \left(t^{12}+t^8+t^4 \right)M^2 \\
&&  \qquad \qquad \,\, + \,\left(t^4 + t^{-4} + 1\right)M^{-2}
\big \}\left\langle x\right\rangle, \\
L^{-1}\left\langle 1\right\rangle &\equiv&
\left( t^{-2}M^4 - t^{-2}M^2 -t^{2}\right)\left\langle 1\right\rangle
+\left(t^6M^{-2} - t^2 M^2 \right)\left\langle x\right\rangle, \\
L^{-2}\left\langle 1\right\rangle &\equiv&
\big\{
t^{-12}M^8 - \left(t^{-12} + t^{-8} \right)M^6 - t^{12}M^{-4} - \left(t^{-4} + 1\right)M^4\\
&& \qquad \qquad  + \,(t^{-4}+1)M^2 + t^{8} + t^4 +1
\big\}\left\langle 1\right\rangle\\
&& + \,  \big\{
t^{16}M^{-6} - t^{-8}M^6 - t^{12}M^{-4} + t^{-4}M^{4} - \left(t^{12}+t^8+t^4\right)M^{-2}\\
&& \qquad \qquad \,\,\,\,\, + \,\left(t^4+t^{-4}+1\right)M^{2}
\big\}\left\langle x\right\rangle.
\end{eqnarray*}
\end{lemma}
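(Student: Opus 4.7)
The plan is to verify each of the four congruences by direct manipulation of the Habiro-type sum $\langle Q\rangle(n)=[n]\sum_{k=0}^{n-1}Q(t,t^{2n},t^{4k})\prod_{l=1}^{k}\beta_{n,l}$, where $\beta_{n,l}:=t^{4n}+t^{-4n}-t^{4l}-t^{-4l}$. The equivalence $\equiv$ absorbs contributions depending only on $t$ and $M$: such terms contribute only to the ``ordinary part'' of the recurrence (the element of $\CR[M^{\pm 1}]$ to which $PJ_E$ reduces), so they do not affect the coefficients of $\langle 1\rangle$ and $\langle x\rangle$ that the lemma records.

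First, I would establish a master identity for the action of $L$: for any $Q\in\BC[t^{\pm1},M^{\pm1},x^{\pm1}]$, prove that $L\langle Q(t,M,x)\rangle\equiv\langle Q^{L}(t,M,x)\rangle$, where $Q^{L}$ is given by an explicit polynomial rule. The derivation starts from the Habiro sum at level $n+1$, namely $(L\langle Q\rangle)(n)=[n+1]\sum_{k=0}^{n}Q(t,t^{2}M,t^{4k})\prod_{l=1}^{k}\beta_{n+1,l}$, and exploits the factorisation $\beta_{n,l}=(Y-t^{4l})(1-Y^{-1}t^{-4l})$ with $Y:=M^{2}$ to express each $\beta_{n+1,l}$ as $\beta_{n,l}$ plus an explicit correction in $t,M,t^{4l}$. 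Expanding $\prod_{l=1}^{k}\beta_{n+1,l}$ via these corrections allows one to telescope the dependence on level $n+1$ into polynomial coefficients in $t,M,t^{4k}$; the extra summand $k=n$ and the shift from $[n+1]$ to $[n]$ contribute only $x$-free terms, which are absorbed by $\equiv$. The same argument run with $n\mapsto n-1$ produces the $L^{-1}$ master identity.

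Second, I would specialise these master identities. Taking $Q=1$ and $Q=x$ in the $L$ master identity yields explicit formulas for $L\langle 1\rangle$ and $L\langle x\rangle$ as $\BC[t^{\pm1},M^{\pm1}]$-combinations of $\langle 1\rangle$ and $\langle x\rangle$; the $Q=1$ specialisation is exactly the first identity of the lemma. The $L^{-1}$ master identity, specialised likewise, supplies the third identity. For the second and fourth identities I would iterate: applying $L$ to the formula for $L\langle 1\rangle$, using $LM=t^{2}ML$ to transform its coefficients by $M\mapsto t^{2}M$, and then substituting the previously obtained formulas for $L\langle 1\rangle$ and $L\langle x\rangle$. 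Collecting the coefficients of $\langle 1\rangle$ and $\langle x\rangle$ recovers the second identity, and an identical iteration with $L^{-1}$ gives the fourth.

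The main obstacle is the master identity of Step~1: expanding $\prod_{l=1}^{k}\beta_{n+1,l}$ into products at level $n$ with controlled polynomial coefficients, while verifying that every boundary term and every prefactor adjustment lands in the subspace $\BC[t,M]$ absorbed by $\equiv$. The bookkeeping of these corrections is the only real difficulty; once the master identities are in place, the specialisations and the iteration in Step~2 and Step~3 are purely mechanical, and matching the final coefficients against those displayed in the lemma is a direct check.
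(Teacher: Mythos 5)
Your overall architecture --- establish first-order formulas for $L\langle 1\rangle$, $L\langle x\rangle$, $L^{-1}\langle 1\rangle$, $L^{-1}\langle x\rangle$, then iterate via $L^{2}\langle 1\rangle=L(L\langle 1\rangle)$ with coefficients transformed by $M\mapsto t^{2}M$ --- is exactly the paper's; the difference is that the paper simply quotes \cite[Proposition 4.5]{CM} for the first-order formulas, whereas you propose to re-derive them from the Habiro sum, and it is precisely there that your sketch has a genuine gap. Writing $\beta_{n,l}=t^{4n}+t^{-4n}-t^{4l}-t^{-4l}=(M^{2}-t^{4l})(1-M^{-2}t^{-4l})$ with $M=t^{2n}$, the passage from level $n$ to level $n+1$ is not controlled by the additive correction $\beta_{n+1,l}=\beta_{n,l}+(t^{4}-1)M^{2}+(t^{-4}-1)M^{-2}$: expanding $\prod_{l\le k}(\beta_{n,l}+c)$ yields sums over subsets of the $\beta_{n,l}$ that do not telescope into $(\text{polynomial in }t,M,t^{4k})\cdot\prod_{l\le k}\beta_{n,l}$. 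The factorization enters multiplicatively instead: one finds $\prod_{l=1}^{k}\beta_{n+1,l}=\dfrac{(M^{2}-1)(x-M^{-2}t^{-4})}{(M^{2}-x)(1-M^{-2}t^{-4})}\prod_{l=1}^{k}\beta_{n,l}$ with $x=t^{4k}$, so the transformed weight is \emph{rational} in $x$ with a pole at $x=M^{2}$; removing it requires cancelling against the $l=k$ factor of the product and reindexing the sum, and this manipulation depends on the specific $Q$. There is therefore no polynomial ``master identity'' $L\langle Q\rangle\equiv\langle Q^{L}\rangle$ of the simple form you posit for all $Q$.

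Second, your disposal of the boundary terms is incorrect as stated. The extra summand $k=n$ equals $[n+1]\,Q(t,t^{2}M,M^{2})\prod_{l=1}^{n}\beta_{n+1,l}$, and the reindexing above creates similar terms involving $\prod_{l=1}^{n-1}\beta_{n,l}$. These are ``$x$-free,'' but they are \emph{not} values of a fixed Laurent polynomial in $t$ and $M$: the $t$-degree of $\prod_{l=1}^{n}\beta_{n+1,l}$ grows quadratically in $n$ (each of the $n$ factors has top degree $t^{4n+4}$), while anything absorbed by $\equiv$, i.e.\ an element of $\CR[M^{\pm1}]$ evaluated at $M=t^{2n}$, has degree growing only linearly in $n$; note also that $\beta_{n+1,n}\neq 0$, so this product does not vanish the way $\prod_{l=1}^{n}\beta_{n,l}$ does via $\beta_{n,n}=0$. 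Hence these contributions cannot simply be ``absorbed by $\equiv$''; proving that, for the particular combinations arising from $Q=1$ and $Q=x$, they cancel or assemble into an element of $\CR[M^{\pm1}]$ is exactly the content of \cite[Proposition 4.5]{CM} --- the step your plan dismisses as bookkeeping. Either carry out that summation-by-parts/creative-telescoping argument in full, or do as the paper does and cite \cite[Proposition 4.5]{CM} for the four first-order formulas; the subsequent iteration to $L^{\pm2}\langle 1\rangle$ in your Step 2--3 is fine and coincides with the paper's proof.
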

\begin{proof}
From \cite[Proposition 4.5]{CM} we have 
\begin{eqnarray*}
L\left\langle 1\right\rangle &\equiv& \left( t^{-2}M^{-4}-t^{-2}M^{-2}-t^2 \right)\left\langle 1\right\rangle 
+ \left( t^6M^2-t^2M^{-2} \right)\left\langle x\right\rangle, \\
L\la x \ra &\equiv& (t^{-2}M^{-2}-t^2M^2)\la 1 \ra +(t^6M^4-t^2M^2-t^2)\la x \ra.
\end{eqnarray*}
By a direct calculation, using the fact that $L^2\la 1 \ra=L(L\la 1 \ra)$ and the above formulas, we obtain the formula for $L^2\la 1 \ra$. The proofs for $L^{-1}\left\langle 1\right\rangle$ and $L^{-2}\left\langle 1\right\rangle$ are similar. 
\end{proof}

Using Lemma \ref{ALemma} and a direct calculation we see that
$$
(L^2 + L^{-2} + p_1(t,M)L + p_{-1}(t,M)L^{-1} +p_0(t,M)) \la 1 \ra \equiv 0.
$$
This means that $PJ_E\in \mathcal{R}[M^{\pm 1}]$. 

\section*{Acknowledgements} 
The second author has been supported by a grant from the Simons Foundation (\#354595).

\end{document}